\newtheorem{theorem}{Theorem}
\newtheorem{corollary}[theorem]{Corollary}
\newtheorem{definition}[theorem]{Definition}
\newtheorem{lemma}[theorem]{Lemma}
\newtheorem{proposition}[theorem]{Proposition}
\newtheorem{remark}[theorem]{Remark}
\newenvironment{proof}[1][Proof]{\noindent\textbf{#1.} }{\ \rule{0.5em}{0.5em}}
\begin{document}

\date{\today}
\title{On a class of submanifolds in tangent bundle with $g-$ natural metric}
\author{Stanis\l aw Ewert-Krzemieniewski \\
%EndAName
West Pomeranian University of Technology Szczecin\\
School of Mathematics\\
Al. Piast\'{o}w 17\\
70-310 Szczecin \\
Poland\\
e-mail: ewert@zut.edu.pl}
\maketitle

\begin{abstract}
An isometric immersion of a Riemannian manifold $M$ into a Riemannian
manifold $N$ gives rise in a natural way to the immersion of the tangent
bundle $TM$ into the tangent bundle $TN$ with a non-degenerate $g-$ natural
metric $G.$

\textbf{Mathematics Subject Classification }Primary 53B20, 53C07, secondary
53B21, 55C25.

\textbf{Key words}: Riemannian manifold, tangent bundle, g - natural metric,
submanifold, izometric immersion, non-degenerate metric.
\end{abstract}

\section{Introduction}

Let $\pi :TN\longrightarrow N$ be the tangent bundle of a manifold $N$ with
Levi-Civita connection $\nabla $ on $N,$ $\pi $ being projection. Then at
each point $(x,u)\in TN$ the tangent space $T_{(x,u)}TN$ splits into direct
sum of two isomorphic spaces $V_{(x,u)}TN$ and $H_{(x,u)}TN,$ where 
\begin{equation*}
V_{(x,u)}TN=Ker(d\pi |_{(x,u)}),\quad H_{(x,u)}TN=Ker(K|_{(x,u)})
\end{equation*}%
and $K$ is called the connection map (\cite{Dombr}) see also (\cite{1}).

More precisely, if $Z=\left( Z^{r}\frac{\partial }{\partial x^{r}}+\overline{%
Z}^{r}\frac{\partial }{\partial u^{r}}\right) |_{(x,u)}\in T_{(x,u)}TN,$
then the vertical and horizontal projections of $Z$ on $T_{x}N$ are given by%
\begin{equation*}
\left( d\pi \right) _{(x,u)}Z=\overline{Z}^{r}\frac{\partial }{\partial x^{r}%
}|_{x},\quad K_{(x,u)}(Z)=\left( \overline{Z}^{r}+u^{s}Z^{t}\Gamma
_{st}^{r}\right) \frac{\partial }{\partial x^{r}}|_{x}.
\end{equation*}

On the other hand, to each vector field $X$ on $N$ there correspond uniquely
determined vector fields $X^{v}$ and $X^{h}$\ on $TN$ such that 
\begin{eqnarray*}
d\pi |_{(x,u)}(X^{v}) &=&0,\quad K|_{(x,u)}(X^{v})=X, \\
K|_{(x,u)}(X^{h}) &=&0,\quad d\pi |_{(x,u)}(X^{h})=X.
\end{eqnarray*}%
$X^{v}$ and $X^{h}$ are called the vertical lift and the horizontal lift of
a given $X$ to $TN$ respectively.

In local coordinates $(x^{r},u^{r})$ on $TN,$ the horizontal and vertical
lifts of a vector field $X=X^{r}\frac{\partial }{\partial x^{r}}$ on $N$\ to 
$TN$ are vector fields given respectively by%
\begin{equation*}
X^{h}=X^{r}\frac{\partial }{\partial x^{r}}-u^{s}X^{t}\Gamma _{st}^{r}\frac{%
\partial }{\partial u^{r}},\quad X^{v}=X^{r}\frac{\partial }{\partial u^{r}}.
\end{equation*}

In the paper we shall frequently use the frame $(\partial _{k}^{h},\partial
_{l}^{v})=\left( \left( \frac{\partial }{\partial x^{k}}\right) ^{h},\left( 
\frac{\partial }{\partial x^{l}}\right) ^{v}\right) $ known as the adapted
frame.

Having given an isometric immersion $f:M\longrightarrow N,$ we have two
tangent bundles $\pi _{N}:TN\longrightarrow N$ and $\pi
_{M}:TM\longrightarrow M,$ where the latter is the subbundle of the former
one. Let $M,$ $N$ be two Riemannian manifolds with metrics $g_{M}$ and $%
g_{N} $ and Levi-Civita connections $\nabla _{M}$ and $\nabla _{N}$
respectively. Then $T_{p}TM$ and $T_{p}TN$ have at a common point $p$ their
own decompositions into vertical and horizontal parts, ie.%
\begin{equation*}
T_{p}TM=V_{p}TM\oplus H_{p}TM=V_{M}\oplus H_{M}
\end{equation*}%
and%
\begin{equation*}
T_{p}TN=V_{p}TN\oplus H_{p}TN=V_{N}\oplus H_{N},
\end{equation*}%
but neither $V_{M}\subset V_{N}$ nor $H_{M}\subset H_{N}$ need to hold along 
$TM.$

So, for a vector $X$ tangent to $M$ we define two vertical lifts $X^{v_{M}},$
$X^{v_{N}}$ and two horizontal lifts $X^{h_{M}},$ $X^{h_{N}}$ with respect
to the bundles over $M$ and $N$ respectively and find relations between
them. These allow us to compute the shape operator of the immersion under
consideration (see (\ref{Lift 2}) below) and make some conclusions about
just obtained submanifold of $TN$.

Notice that totally geodesic submanifolds of tangent bundle with $g-$
natural metric\ are also studied in (\cite{Abb Yamp}) and (\cite{Ewert 3}).

Throughout the paper all manifolds under consideration are Hausdorff and
smooth ones. The metrics on the base manifolds are Riemannian ones and the
metrics on tangent spaces are non-degenerat.

\section{Preliminaries on submanifolds}

\QTP{Body Math}
Throughout the paper we assume that indices $h,i,j,k,l,r,s,t$ run through
the range $1,...,n,$ while $a,b,c,d,e$ run through the range $1,...,m,$ and $%
m<n.$ Moreover, the indices $x,y,z=m+1,...,n.$

\QTP{Body Math}
Let $(N,g),$ dim$N=n,$ be a Riemannian manifold with metric $g,$ covered by
coordinate neighbourhoods $(U,$ $(x^{j})),$ $j=1,...,n.$ Let $(M,\widetilde{g%
})$ be a Riemannian manifold covered by coordinate neighbourhoods $%
(V,(y^{a})),$ $a=1,...,m,$ isometrically immersed in $(N,g)$ and let the
local expression for this immersion be $x^{r}=x^{r}(y^{a}),$ $r=1,...,n,$ $%
a=1,...,m.$ Put $\partial _{r}=\frac{\partial }{\partial x^{r}}$ and $%
B_{a}^{r}=\frac{\partial x^{r}}{\partial y^{a}}.$

\QTP{Body Math}
For the local immersion $x^{r}=x^{r}(y^{a})$ the components of the
Levi-Civita connection $\nabla $ of the induced metric $g_{ab}=g(B_{a}^{r}%
\partial _{r},B_{b}^{s}\partial _{s})=g_{rs}B_{a}^{r}B_{b}^{s}$ are%
\begin{equation*}
\Gamma _{ab}^{c}=\left[ B_{a.b}^{r}+\Gamma _{st}^{r}B_{a}^{s}B_{b}^{t}\right]
B_{r}^{c},\quad B_{r}^{c}=g^{cd}B_{d}^{t}g_{tr}.
\end{equation*}%
The van der Waerden-Bertolotti covariant derivative of $B_{a}^{r}$ is
defined by%
\begin{equation}
\nabla _{b}B_{a}^{r}=B_{a.b}^{r}+\Gamma _{st}^{r}B_{a}^{s}B_{b}^{t}-\Gamma
_{ab}^{c}B_{c}^{r},  \label{Sub 2}
\end{equation}%
where the dot denotes partial derivative with respect to $y^{b}.$ The
operator $\nabla _{b}$ is the covariant differentiation on $M$ with respect
to $\Gamma _{ab}^{c}$ and can be extended to tensor field on $M$ of mixed
type. For example%
\begin{equation*}
\nabla _{c}\nabla _{b}B_{a}^{r}=\partial _{c}\left( \nabla
_{b}B_{a}^{r}\right) +\Gamma _{st}^{r}B_{c}^{s}\nabla _{b}B_{a}^{t}-\Gamma
_{cb}^{d}\nabla _{d}B_{a}^{r}-\Gamma _{ca}^{d}\nabla _{b}B_{d}^{r}.
\end{equation*}

\QTP{Body Math}
For any fixed indices $a$ and $b,$ the vector $\nabla _{b}B_{a}^{r}\partial
_{r}$ is orthogonal to the submanifold. Hence%
\begin{equation*}
\nabla _{b}B_{a}^{r}\partial _{r}=h_{ab}^{x}N_{x}^{r}\partial _{r},
\end{equation*}%
where $N_{x}^{r}\partial _{r},$ $x=m+1,...,n$ are unit vectors normal to the
submanifold. For fixed $x,$ the $h_{ab}^{x}$ are components of the symmetric 
$(0,2)$ tensor $h$ on $M,$ called the second fundamental form. Consequently,
we have the decomposition%
\begin{equation*}
\nabla _{c}\nabla _{b}B_{a}^{r}=\nabla
_{c}h_{ba}^{x}N_{x}^{r}+h_{ba}^{x}\nabla _{c}N_{x}^{r},
\end{equation*}%
where $\left( \nabla _{c}N_{x}^{r}\right) \partial _{r}$ is tangent to the
submanifold for all $c$ and $x.$

\QTP{Body Math}
The Gauss formula is%
\begin{equation*}
\widetilde{\nabla }_{X}Y=\nabla _{X}Y+h(X,Y),
\end{equation*}%
for all vector fields \thinspace $X,$ $Y$ tangent to $M,$ where $\widetilde{%
\nabla }$ is the Levi-Civita connection on $N$.

\QTP{Body Math}
The Weingarten formula is 
\begin{equation*}
\widetilde{\nabla }_{X}\eta =-\widetilde{A}_{\eta }X+\widetilde{D}_{\eta }X,
\end{equation*}%
where $X$ is a tangent vector filed and $\eta $ is a normal one. $\widetilde{%
A}$ is called the shape operator while $\widetilde{D}$ is Levi-Civita
connection induced in the normal bundle over $M.$ We have%
\begin{equation*}
g(\widetilde{A}_{\eta }X,Y)=g(h(X,Y),\eta ).
\end{equation*}

\QTP{Body Math}
A submanifold $M$ is said to be totally geodesic if the second fundamental
form $h$ vanishes identically, equivalently, if the shape operator $%
\widetilde{A}$ vanishes identically. For more details see (\cite{Yano}) or (%
\cite{Yano Kon})$.$

\section{Preliminaries on $g-$ natural metrics}

In (\cite{5}) the class of $g-$natural metrics was defined. We have

\begin{lemma}
(\cite{5},\cite{6}, \cite{7}) Let $(M,g)$ be a Riemannian manifold and $G$
be a $g-$ natural metric on $TM.$ There exist functions $a_{j},$ $%
b_{j}:<0,\infty )\longrightarrow R,$ $j=1,2,3,$ such that for every $X,$ $Y,$
$u\in T_{x}M$%
\begin{multline*}
G_{(x,u)}(X^{h},Y^{h})=(a_{1}+a_{3})(r^{2})g_{x}(X,Y)+(b_{1}+b_{3})(r^{2})g_{x}(X,u)g_{x}(Y,u),
\\
G_{(x,u)}(X^{h},Y^{v})=G_{(x,u)}(X^{v},Y^{h})=a_{2}(r^{2})g_{x}(X,Y)+b_{2}(r^{2})g_{x}(X,u)g_{x}(Y,u),
\\
G_{(x,u)}(X^{v},Y^{v})=a_{1}(r^{2})g_{x}(X,Y)+b_{1}(r^{2})g_{x}(X,u)g_{x}(Y,u),
\end{multline*}%
where $r^{2}=g_{x}(u,u).$ For $\dim M=1$ the same holds for $b_{j}=0,$ $%
j=1,2,3.$
\end{lemma}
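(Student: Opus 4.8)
The plan is to exploit the fibre splitting $T_{(x,u)}TM=H_{(x,u)}TM\oplus V_{(x,u)}TM$ together with the defining property of a $g-$natural metric, namely its invariance under the tangent lifts of local isometries, and then to reduce the statement to a purely algebraic problem governed by the invariant theory of the orthogonal group. First I would note that, since every tangent vector to $TM$ at $(x,u)$ is an adapted-frame combination of a horizontal and a vertical lift, the metric $G_{(x,u)}$ is completely determined by the three bilinear forms on $T_{x}M$
\begin{equation*}
\beta _{u}^{hh}(X,Y)=G_{(x,u)}(X^{h},Y^{h}),\quad \beta _{u}^{hv}(X,Y)=G_{(x,u)}(X^{h},Y^{v}),\quad \beta _{u}^{vv}(X,Y)=G_{(x,u)}(X^{v},Y^{v}),
\end{equation*}
each depending on the point $(x,u)$. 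The symmetry of $G$ forces $\beta _{u}^{hh}$ and $\beta _{u}^{vv}$ to be symmetric and identifies $G_{(x,u)}(X^{v},Y^{h})$ with $\beta _{u}^{hv}(Y,X)$, so the whole content of the lemma is the determination of these three forms; in particular the asserted identity $G(X^{h},Y^{v})=G(X^{v},Y^{h})$ will amount to the symmetry of $\beta _{u}^{hv}$.

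Next I would translate naturality into a pointwise equivariance condition. Using the finite-order property of natural operators and the behaviour of $G$ under the fibre homotheties $u\mapsto tu$ and under isometries, a homogeneity argument should show that the metric-dependence of each $\beta _{u}$ collapses to the pointwise value $g_{x}$, with no contribution from derivatives of $g$: after passing to normal coordinates centred at $x$ the first-order jet of $g$ vanishes, and one checks that no higher-order term of the relevant weight survives. Each $\beta _{u}$ is then a smooth $O(n)-$equivariant map $T_{x}M\longrightarrow T_{x}^{\ast }M\otimes T_{x}^{\ast }M$, where $O(n)$ is realised as the isotropy group of $g_{x}$ acting on $T_{x}M$ and, through its tangent lift, on the fibre.

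The algebraic heart is then the first fundamental theorem for $O(n)$: an equivariant bilinear form built from the metric and a single vector $u$ must lie in the span of $g_{x}(X,Y)$ and $g_{x}(X,u)g_{x}(Y,u)$, with coefficients that are invariant and hence functions of the only available invariant $r^{2}=g_{x}(u,u)$; no skew-symmetric piece can occur, since none can be manufactured $O(n)-$equivariantly from a single vector, which yields the desired symmetry of $\beta _{u}^{hv}$. Writing $\beta _{u}^{vv}=a_{1}(r^{2})g_{x}+b_{1}(r^{2})g_{x}(\cdot ,u)g_{x}(\cdot ,u)$ and $\beta _{u}^{hv}=a_{2}(r^{2})g_{x}+b_{2}(r^{2})g_{x}(\cdot ,u)g_{x}(\cdot ,u)$, and denoting the coefficients of $\beta _{u}^{hh}$ by $a_{1}+a_{3}$ and $b_{1}+b_{3}$ (which merely names, as $a_{3},b_{3}$, their differences from the vertical coefficients), produces the three displayed formulas. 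Finally, when $\dim M=1$ every vector is proportional to $u$, so $g_{x}(X,u)g_{x}(Y,u)=r^{2}g_{x}(X,Y)$ and the rank-one terms can be absorbed into the $a_{j}$, permitting the normalisation $b_{j}=0$.

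I expect the main obstacle to be precisely this reduction step, rather than the closing invariant-theoretic computation: one must argue that a $g-$natural metric, by virtue of its naturality and homogeneity, cannot incorporate derivatives of the metric, so that the three fibre forms depend on the zeroth-order data $(g_{x},u)$ alone. Once the problem has been posed cleanly as the classification of $O(n)-$equivariant bilinear forms in a single vector variable, the conclusion follows immediately from orthogonal invariant theory.
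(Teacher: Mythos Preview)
The paper does not prove this lemma; it is quoted as a known classification result from the cited references (Kowalski--Sekizawa \cite{5} and Abbassi--Sarih \cite{6},\cite{7}), so there is no ``paper's own proof'' to compare against. Your sketch is in fact a faithful outline of the strategy used in the original Kowalski--Sekizawa classification: reduce the natural metric to an $O(n)$-equivariant family of bilinear forms on a single fibre, then invoke the first fundamental theorem for $O(n)$ to list the possible invariants.

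Your self-diagnosed weak point is the real one. The passage from ``$G$ is a $g$-natural metric'' to ``the fibre forms depend only on $(g_{x},u)$ and not on any jet of $g$'' is the substantive content of the theorem, and your paragraph on homogeneity and normal coordinates is too compressed to count as a proof. In the original treatment this step requires the full machinery of natural operators (finite order via the Peetre-type theorem, then a weight/homogeneity count that kills the higher-jet contributions); simply observing that first derivatives vanish in normal coordinates does not by itself exclude second-order curvature-type terms such as $R(X,u,Y,u)$, which are $O(n)$-equivariant and bilinear in $X,Y$. You would need to show explicitly that the homogeneity degree forced by naturality under constant rescalings of $g$ is incompatible with any curvature contribution. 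Once that reduction is secured, your invariant-theory endgame and the $\dim M=1$ remark are correct.
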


Setting $a_{1}=1,$ $a_{2}=a_{3}=b_{j}=0$ we obtain the Sasaki metric, while
setting $a_{1}=b_{1}=\frac{1}{1+r^{2}},$ $a_{2}=b_{2}=0=0,$ $a_{1}+a_{3}=1,$ 
$b_{1}+b_{3}=1$ we get the Cheeger-Gromoll one.

Following (\cite{6}) we put

\begin{enumerate}
\item $a(t)=a_{1}(t)\left( a_{1}(t)+a_{3}(t)\right) -a_{2}^{2}(t),$

\item $F_{j}(t)=a_{j}(t)+tb_{j}(t),$

\item $F(t)=F_{1}(t)\left[ F_{1}(t)+F_{3}(t)\right] -F_{2}^{2}(t)$

for all $t\in <0,\infty ).$
\end{enumerate}

We shall often abbreviate: $A=a_{1}+a_{3},$ $B=b_{1}+b_{3}.$

\begin{lemma}
\label{Lemma 9}(\cite{6}, Proposition 2.7) The necessary and sufficient
conditions for a $g-$ natural metric $G$ on the tangent bundle of a
Riemannian manifold $(M,g)$ to be non-degenerate are $a(t)\neq 0$ and $%
F(t)\neq 0$ for all $t\in <0,\infty ).$ If $\dim M=1$ this is equivalent to $%
a(t)\neq 0$ for all $t\in <0,\infty ).$
\end{lemma}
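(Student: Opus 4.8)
The plan is to fix a point $(x,u)\in TM$ and compute the Gram matrix of $G$ on $T_{(x,u)}TM$ with respect to a frame adapted to $u$, then read off non-degeneracy from its determinant. By the splitting $T_{(x,u)}TM=H_{(x,u)}TM\oplus V_{(x,u)}TM$ recalled in the introduction, with the horizontal and vertical lifts furnishing isomorphisms of each summand onto $T_xM$, any basis $e_1,\dots,e_n$ of $T_xM$ produces a basis $e_1^{h},\dots,e_n^{h},e_1^{v},\dots,e_n^{v}$ of $T_{(x,u)}TM$. The decisive move is to take this basis $g_x$-orthonormal and aligned with $u$.

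Assume first $u\neq 0$ and set $r=\sqrt{g_x(u,u)}$. Choose a $g_x$-orthonormal basis with $e_1=u/r$, so that $g_x(e_i,e_j)=\delta_{ij}$ and $g_x(e_i,u)=r\,\delta_{i1}$. Substituting these into the three defining formulas for $G$, every factor $g_x(e_i,u)g_x(e_j,u)$ becomes $r^2\delta_{i1}\delta_{j1}$; hence the $b_j$-terms survive only for the pair $e_1^{h},e_1^{v}$, while for all other index pairs only the $a_j$-terms remain, and these are diagonal in $i,j$. Therefore, after reordering the basis as $e_1^{h},e_1^{v},e_2^{h},e_2^{v},\dots,e_n^{h},e_n^{v}$ — a conjugation of the Gram matrix by a permutation matrix, which leaves the determinant unchanged — the matrix of $G$ becomes block diagonal with $2\times2$ blocks.

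Next I would identify the two block types. The block attached to the direction of $u$, spanned by $e_1^{h},e_1^{v}$, is $\bigl(\begin{smallmatrix}F_1+F_3 & F_2\\ F_2 & F_1\end{smallmatrix}\bigr)$ evaluated at $r^2$, using $A+r^2B=F_1+F_3$, $a_2+r^2b_2=F_2$ and $a_1+r^2b_1=F_1$; its determinant is $F_1(F_1+F_3)-F_2^2=F(r^2)$. Each of the remaining $n-1$ blocks, attached to a direction $e_i\perp u$, equals $\bigl(\begin{smallmatrix}A & a_2\\ a_2 & a_1\end{smallmatrix}\bigr)$ at $r^2$, with determinant $a_1(a_1+a_3)-a_2^2=a(r^2)$. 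Consequently $\det G=F(r^2)\,[a(r^2)]^{n-1}$, so $G$ is non-degenerate at $(x,u)$ precisely when $F(r^2)\neq0$ and $a(r^2)\neq0$.

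It remains to cover $u=0$ and to globalize. At $u=0$ no direction is singled out, every $g_x(\cdot,u)$ vanishes, and each $2\times2$ block collapses to $\bigl(\begin{smallmatrix}A(0) & a_2(0)\\ a_2(0) & a_1(0)\end{smallmatrix}\bigr)$ of determinant $a(0)$; since $F_j(0)=a_j(0)$ we have $F(0)=a(0)$, so the point $u=0$ imposes nothing beyond the conditions already found. As $(x,u)$ runs over $TM$ the value $r^2=g_x(u,u)$ sweeps out all of $<0,\infty)$, whence $G$ is non-degenerate everywhere if and only if $a(t)\neq0$ and $F(t)\neq0$ for every $t\in <0,\infty)$. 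When $\dim M=1$ the orthogonal blocks are absent ($n-1=0$) and, by the first lemma, $b_j=0$, so $F_j=a_j$ and $F=a$; then $\det G=F(r^2)=a(r^2)$ and non-degeneracy reduces to $a(t)\neq0$ for all $t$. The only point requiring care is the clean separation of the single $u$-direction from its orthogonal complement; once the adapted orthonormal frame is fixed, the rest is a direct computation of two $2\times2$ determinants.
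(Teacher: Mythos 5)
Your proposal is correct, but there is nothing in the paper to compare it against: the paper states this lemma purely as a quotation from the literature (\cite{6}, Proposition 2.7) and supplies no proof of its own. Your argument is the standard self-contained verification, and it is complete. In a $g_x$-orthonormal frame with $e_1=u/r$ the Gram matrix of $G$ does split, after the reordering you describe, into the block $\bigl(\begin{smallmatrix}F_1+F_3 & F_2\\ F_2 & F_1\end{smallmatrix}\bigr)$ attached to the $u$-direction and $n-1$ copies of $\bigl(\begin{smallmatrix}A & a_2\\ a_2 & a_1\end{smallmatrix}\bigr)$ attached to directions orthogonal to $u$ (all cross terms vanish since $g_x(e_i,e_j)=0$ and at most one of $g_x(e_i,u),g_x(e_j,u)$ is nonzero for $i\neq j$), giving $\det G=F(r^2)\,a(r^2)^{n-1}$. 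The three remaining points are also handled properly: the case $u=0$ is consistent because $F_j(0)=a_j(0)$ forces $F(0)=a(0)$; the necessity direction follows because $g_x(u,u)$ sweeps out all of $[0,\infty)$ as $(x,u)$ ranges over $TM$; and the $\dim M=1$ case reduces correctly, since there the first lemma gives $b_j=0$, hence $F=a$, while the orthogonal blocks disappear. This is essentially the argument one finds in the cited source, so your reconstruction fills the gap the paper leaves to the reference.
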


We also have

\begin{proposition}
(\cite{7-1}, \cite{7-2})Let $(N,g)$ be a Riemannian manifold, $\nabla $ its
Levi-Civita connection and $R$ its Riemann curvature tensor. If $G$ is a
non-degenerate $g-$ natural metric on $TN,$ then the Levi-Civita connection $%
\widetilde{\nabla }$ of $(TN,G)$ at a point $(x,u)\in TN$ is given by%
\begin{equation*}
\left( \widetilde{\nabla }_{X^{h}}Y^{h}\right) _{(x,u)}=\left( \nabla
_{X}Y\right) _{(x,u)}^{h}+h\left\{ \mathbf{A}(u,X_{x},Y_{x})\right\}
+v\left\{ \mathbf{B}(u,X_{x},Y_{x})\right\} ,
\end{equation*}%
\begin{equation*}
\left( \widetilde{\nabla }_{X^{h}}Y^{v}\right) _{(x,u)}=\left( \nabla
_{X}Y\right) _{(x,u)}^{v}+h\left\{ \mathbf{C}(u,X_{x},Y_{x})\right\}
+v\left\{ \mathbf{D}(u,X_{x},Y_{x})\right\} ,
\end{equation*}%
\begin{equation*}
\left( \widetilde{\nabla }_{X^{v}}Y^{h}\right) _{(x,u)}=h\left\{ \mathbf{C}%
(u,Y_{x},X_{x})\right\} +v\left\{ \mathbf{D}(u,Y_{x},X_{x})\right\} ,
\end{equation*}%
\begin{equation*}
\left( \widetilde{\nabla }_{X^{v}}Y^{v}\right) _{(x,u)}=h\left\{ \mathbf{E}%
(u,X_{x},Y_{x})\right\} +v\left\{ \mathbf{F}(u,X_{x},Y_{x})\right\} ,
\end{equation*}%
where $\mathbf{A},$ $\mathbf{B},$ $\mathbf{C},$ $\mathbf{D},$ $\mathbf{E},$ $%
\mathbf{F}$ are some F-tensors defined on the product $TN\otimes TN\otimes
TN.$
\end{proposition}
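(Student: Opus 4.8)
The plan is to determine $\widetilde{\nabla}$ from the Koszul formula, working throughout in the adapted frame $(\partial_k^h, \partial_l^v)$ and using the three families of inner products recorded in the first Lemma. For vector fields $U, V, W$ on $TN$ the Koszul identity reads
\begin{multline*}
2G(\widetilde{\nabla}_U V, W) = U\,G(V,W) + V\,G(U,W) - W\,G(U,V) \\
+ G([U,V],W) - G([U,W],V) - G([V,W],U),
\end{multline*}
so the task reduces to evaluating the right-hand side on each of the pairs $(X^h, Y^h)$, $(X^h, Y^v)$, $(X^v, Y^h)$, $(X^v, Y^v)$ and then inverting $G$ to isolate $\widetilde{\nabla}_U V$ itself. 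Torsion-freeness and metric compatibility are then automatic, so no separate verification of the connection axioms is needed; all the content lies in producing closed forms for the components.

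First I would assemble the structural data for lifts. The Lie brackets at $(x,u)$ are
\begin{equation*}
[X^h, Y^h] = [X,Y]^h - (R(X,Y)u)^v, \quad [X^h, Y^v] = (\nabla_X Y)^v, \quad [X^v, Y^v] = 0,
\end{equation*}
and the curvature term in the first bracket is the sole source of the Riemann tensor $R$ in the final expressions. Next I would record how the two kinds of lift differentiate the coefficient functions: since $a_j, b_j$ depend only on $r^2 = g(u,u)$, one has $X^h(r^2) = 0$ because horizontal lifts are tangent to the parallel-transport flow, while $X^v(r^2) = 2 g_x(X,u)$. Combined with $X^h g_x(Y,u) = g_x(\nabla_X Y, u)$ and $X^v g_x(Y,u) = g_x(Y,X)$, this lets me differentiate each of the three inner-product expressions of the first Lemma explicitly, turning every term $U\,G(V,W)$, $G([U,V],W)$, etc., into an explicit combination of $g_x(\cdot,\cdot)$-factors, the functions $a_j, b_j$ and their derivatives, and the curvature $R$.

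The left-hand side $2G(\widetilde{\nabla}_U V, W)$, written with unknown horizontal and vertical components of $\widetilde{\nabla}_U V$, is linear in those components through a $2\times 2$ block. Away from the $u$-direction the block is $\bigl(\begin{smallmatrix} a_1+a_3 & a_2 \\ a_2 & a_1 \end{smallmatrix}\bigr)$, whose determinant is precisely $a(t) = a_1(a_1+a_3) - a_2^2$; along the $u$-direction the rank-one $b_j$-corrections replace each $a_j$ by $F_j = a_j + t b_j$, giving the block $\bigl(\begin{smallmatrix} F_1+F_3 & F_2 \\ F_2 & F_1 \end{smallmatrix}\bigr)$ with determinant $F(t) = F_1(F_1+F_3) - F_2^2$. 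Solving for the components thus requires inverting these two blocks, which is exactly where Lemma \ref{Lemma 9} enters: non-degeneracy guarantees $a(t) \neq 0$ and $F(t) \neq 0$, so both inversions are legitimate. Reading off the horizontal and vertical parts of the solution defines the six F-tensors $\mathbf{A}, \dots, \mathbf{F}$ as the stated $u$-dependent combinations.

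The main obstacle is bookkeeping rather than anything conceptual. In each Koszul computation one must carefully separate the part proportional to $g_x(X,Y)$ from the parts proportional to $g_x(X,u)g_x(Y,u)$ and to $g_x(\cdot,u)$ along the fiber direction, and then track these through the two-stage inversion governed by $a(t)$ and $F(t)$. The curvature contributions from $[X^h, Y^h]$ must be projected correctly into horizontal and vertical parts before inverting, and the derivatives of $a_j, b_j$ arising from the $X^v(r^2)$ terms must be collected consistently. Once the clean split of the metric into an $a(t)$-block orthogonal to $u$ and an $F(t)$-block along $u$ is exploited, the inversion decouples and the six F-tensors emerge in closed form.
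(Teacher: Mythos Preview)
Your outline is correct and is precisely the standard approach to this result: apply the Koszul formula to pairs of lifted vector fields, use the known bracket relations $[X^h,Y^h]=[X,Y]^h-(R(X,Y)u)^v$, $[X^h,Y^v]=(\nabla_XY)^v$, $[X^v,Y^v]=0$, differentiate the metric coefficients via $X^h(r^2)=0$ and $X^v(r^2)=2g(X,u)$, and then invert the resulting $2\times 2$ system, whose determinants are $a(t)$ and $F(t)$; the non-degeneracy hypothesis from Lemma~\ref{Lemma 9} is exactly what makes that inversion possible.

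There is nothing to compare against, however, because the paper does not supply its own proof of this proposition. The result is quoted from the references \cite{7-1} and \cite{7-2} (with historical credit to \cite{6}, \cite{7}) and is followed only by a remark about misprints in the original sources. Your Koszul-formula argument is in fact the method used in those references, so your proposal recovers the original proof rather than offering an alternative to anything in the present paper.
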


\begin{remark}
Expressions for $\mathbf{A},$ $\mathbf{B},$ $\mathbf{C},$ $\mathbf{D},$ $%
\mathbf{E},$ $\mathbf{F}$ were presented for the first time in the original
papers (\cite{6},\cite{7}). Unfortunately, they contain some misprints and
omissions. Therefore, for the correct form, we reefer the rider to ((\cite%
{7-1}, \cite{7-2}), see also (\cite{8} \cite{11}).
\end{remark}

\section{Lifts of the vectors fields}

Let $f:M\longrightarrow N$ be an isometric immersion of a Riemannian
manifold $M$ into a Riemannian manifold $N.$ Suppose that the following
diagram holds 
\begin{equation*}
\begin{array}{ccccc}
(\pi ^{-1}(U),(x^{r},u^{r}))~~TN & \boldsymbol{\longleftarrow -} & 
\widetilde{f} & \boldsymbol{--} & (\pi ^{-1}(V),(y^{a},v^{a}))~~TM \\ 
\begin{array}{c}
\boldsymbol{|} \\ 
\boldsymbol{|}%
\end{array}
&  &  &  & 
\begin{array}{c}
\boldsymbol{|} \\ 
\boldsymbol{|}%
\end{array}
\\ 
\pi _{N} &  &  &  & \pi _{M} \\ 
\begin{array}{c}
\boldsymbol{|} \\ 
\boldsymbol{\downarrow }%
\end{array}
&  &  &  & 
\begin{array}{c}
\boldsymbol{|} \\ 
\boldsymbol{\downarrow }%
\end{array}
\\ 
(U,(x^{r}))~~N & \boldsymbol{\longleftarrow -} & f & \boldsymbol{--} & 
(V,(y^{a}))~~M%
\end{array}%
,
\end{equation*}
where $(U,(x^{r}))$ and $(V,(y^{a}))$ are coordinate neighbourhoods on $N$
and $M$ respectively while the local expression for $f$ is: $%
x^{r}=x^{r}(y^{a}),$ $r=1,...,n,$ $a=1,...,m$ and $m<n.$ Then the local
coordinate vector fields on $M$ are given by $\frac{\delta }{\delta y^{a}}=%
\frac{\partial x^{r}}{\partial y^{a}}\frac{\partial }{\partial x^{r}}%
=B_{a}^{r}\frac{\partial }{\partial x^{r}}.$

Define the map 
\begin{equation}
\widetilde{f}:x^{r}=x^{r}(y^{a}),\ u^{r}=v^{a}B_{a}^{r}.  \label{Lift 2}
\end{equation}
This is an immersion of the rank $2m$ since the Jacobi matrix $J$ is of the
form 
\begin{equation*}
J=%
\begin{bmatrix}
\frac{\partial x^{r}}{\partial y^{a}} & \frac{\partial x^{r}}{\partial v^{a}}
\\ 
\frac{\partial u^{r}}{\partial y^{a}} & \frac{\partial u^{r}}{\partial v^{a}}%
\end{bmatrix}%
=%
\begin{bmatrix}
B_{a}^{r} & 0 \\ 
v^{b}\partial _{b}B_{a}^{r} & B_{a}^{r}%
\end{bmatrix}%
.
\end{equation*}%
Since $\frac{\delta }{\delta y^{a}}=B_{a}^{r}\frac{\partial }{\partial x^{r}}
$ the vectors tangent to $TM$ are%
\begin{equation*}
\frac{\partial }{\partial y^{a}}=\frac{\delta }{\delta y^{a}}%
+v^{b}B_{a.b}^{r}\frac{\partial }{\partial u^{r}},\quad \frac{\partial }{%
\partial v^{a}}=B_{a}^{r}\frac{\partial }{\partial u^{r}}.
\end{equation*}

\begin{definition}
\label{Lift of the immersion}Let $f:M\longrightarrow N$ be an isometric
immersion. Then the map $\widetilde{f}:TM\longrightarrow TN,$ locally given
by $x^{r}=x^{r}(y^{a}),\ u^{r}=v^{a}B_{a}^{r},$ will be called the lift of
the immersion $f$ and its image $LM$ will be called the lift of the
submanifold $M.$
\end{definition}

\begin{remark}
The lift of an immersion defined above seems to be the most natural since $%
B_{a}^{r}\partial _{r}|_{(x^{r}(y^{a}))}$ are coordinate vectors at the
point $(x^{r}(y^{a}))\in M$ and $\left( v^{a}\right) $ are components of
tangent vectors. This kind of lift appears quite natural (c.f. \cite{3-1}).
\end{remark}

\begin{lemma}
\label{Lifts of coordinate vf}Let $\widetilde{f}:TM\longrightarrow TN$
locally given by $x^{r}=x^{r}(y^{a}),\ u^{r}=v^{a}B_{a}^{r}$ be the lift of
the isometric immersion $f:M\longrightarrow N.$ Then the vertical and
horizontal lifts of the coordinate vector field $\frac{\delta }{\delta y^{a}}
$ on $M$ with respect to $TN$ an $TM$ are related by%
\begin{equation*}
\left( \frac{\delta }{\delta y^{a}}\right) ^{v_{N}}=\left( \frac{\delta }{%
\delta y^{a}}\right) ^{v_{M}},
\end{equation*}%
\begin{equation*}
\left( \frac{\delta }{\delta y^{a}}\right) ^{h_{N}}=\left( \frac{\delta }{%
\delta y^{a}}\right) ^{h_{M}}-v^{b}\nabla _{b}B_{a}^{r}\left( \frac{\partial 
}{\partial x^{r}}\right) ^{v_{N}}.
\end{equation*}
\end{lemma}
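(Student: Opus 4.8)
The plan is to compute both sides directly in the adapted coordinates and reduce the difference of the two horizontal lifts to the defining formula (\ref{Sub 2}) of the van der Waerden--Bertolotti derivative. Throughout I would keep in mind that all lifts are taken along the image $LM$, so that the relation $u^{s}=v^{b}B_{b}^{s}$ from (\ref{Lift 2}) must be substituted wherever the base point of $TN$ enters.

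For the vertical lifts I would first observe that both sides reduce to the same vector. Writing $\frac{\delta }{\delta y^{a}}=B_{a}^{r}\partial _{r}$ as a field on $N$, its vertical lift to $TN$ is $B_{a}^{r}\frac{\partial }{\partial u^{r}}$ by the formula $X^{v}=X^{r}\frac{\partial }{\partial u^{r}}$. On the other hand, the coordinate field $\frac{\partial }{\partial y^{a}}$ on $M$ has $v_{M}$-lift equal to $\frac{\partial }{\partial v^{a}}$, and by the tangency relation $\frac{\partial }{\partial v^{a}}=B_{a}^{r}\frac{\partial }{\partial u^{r}}$ recorded just before Definition \ref{Lift of the immersion}, this is the same vector of $T(TN)$. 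Hence the first equality holds.

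For the horizontal lifts I would compute each side separately and subtract. Using $X^{h}=X^{r}\partial _{r}-u^{s}X^{t}\Gamma _{st}^{r}\frac{\partial }{\partial u^{r}}$ with $X^{t}=B_{a}^{t}$, and substituting $u^{s}=v^{b}B_{b}^{s}$, the $h_{N}$-lift becomes $B_{a}^{r}\partial _{r}-v^{b}B_{b}^{s}B_{a}^{t}\Gamma _{st}^{r}\frac{\partial }{\partial u^{r}}$. For the $h_{M}$-lift I would apply the same formula but with the connection on $M$, obtaining $\frac{\partial }{\partial y^{a}}-v^{c}\Gamma _{ca}^{b}\frac{\partial }{\partial v^{b}}$, and then push this vector forward into $T(TN)$ through both tangency relations $\frac{\partial }{\partial y^{a}}=\frac{\delta }{\delta y^{a}}+v^{b}B_{a.b}^{r}\frac{\partial }{\partial u^{r}}$ and $\frac{\partial }{\partial v^{b}}=B_{b}^{r}\frac{\partial }{\partial u^{r}}$.

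Subtracting the two horizontal lifts, the horizontal parts $B_{a}^{r}\partial _{r}$ cancel and only a vertical $\frac{\partial }{\partial u^{r}}$ term survives, with coefficient $v^{c}[\,B_{a.c}^{r}+\Gamma _{st}^{r}B_{a}^{s}B_{c}^{t}-\Gamma _{ac}^{d}B_{d}^{r}\,]$. I would recognize the bracket as exactly $\nabla _{c}B_{a}^{r}$ from (\ref{Sub 2}), after invoking the symmetry of the Levi-Civita symbols $\Gamma _{st}^{r}=\Gamma _{ts}^{r}$ and $\Gamma _{ac}^{d}=\Gamma _{ca}^{d}$ to reconcile the order of the factors. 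Since $\frac{\partial }{\partial u^{r}}=(\partial _{r})^{v_{N}}$, this gives the stated relation. The main obstacle is purely the index bookkeeping: one must carry the substitution $u^{s}=v^{b}B_{b}^{s}$ correctly through the $TN$-lift and align the dummy indices so that the three surviving terms assemble into $\nabla _{c}B_{a}^{r}$ rather than leaving a spurious residual.
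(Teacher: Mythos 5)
Your proposal is correct and follows essentially the same route as the paper: both arguments work in the adapted coordinates, use the tangency relations $\frac{\partial }{\partial y^{a}}=\frac{\delta }{\delta y^{a}}+v^{b}B_{a.b}^{r}\frac{\partial }{\partial u^{r}}$ and $\frac{\partial }{\partial v^{a}}=B_{a}^{r}\frac{\partial }{\partial u^{r}}$ together with the substitution $u^{s}=v^{b}B_{b}^{s}$, and reduce the vertical residue to $\nabla _{b}B_{a}^{r}$ via (\ref{Sub 2}). The only cosmetic difference is that the paper substitutes (\ref{Sub 2}) into the $h_{N}$-lift and regroups terms, whereas you expand both lifts and recognize (\ref{Sub 2}) in the difference at the end; these are the same computation.
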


\begin{proof}
By the use of the definitions of \ $v_{N}$ and $v_{M}$ we have 
\begin{equation*}
\left( \frac{\delta }{\delta y^{a}}\right) ^{v_{N}}=B_{a}^{r}\left( \frac{%
\partial }{\partial x^{r}}\right) ^{v_{N}}=B_{a}^{r}\frac{\partial }{%
\partial u^{r}}=\frac{\partial }{\partial v^{a}}=\left( \frac{\delta }{%
\delta y^{a}}\right) ^{v_{M}}.
\end{equation*}

For horizontal lifts we have%
\begin{equation*}
\left( \frac{\delta }{\delta y^{a}}\right) ^{h_{M}}=\frac{\partial }{%
\partial y^{a}}-\Gamma _{ab}^{c}v^{b}\frac{\partial }{\partial v^{c}}
\end{equation*}%
and, along $M,$%
\begin{multline*}
\left( \frac{\delta }{\delta y^{a}}\right) ^{h_{N}}=B_{a}^{r}\left( \frac{%
\partial }{\partial x^{r}}\right) ^{h_{N}}=B_{a}^{r}\left( \frac{\partial }{%
\partial x^{r}}-u^{s}\Gamma _{sr}^{t}\frac{\partial }{\partial u^{t}}\right)
= \\
\frac{\delta }{\delta y^{a}}-v^{b}B_{b}^{r}B_{a}^{s}\Gamma _{rs}^{t}\frac{%
\partial }{\partial u^{t}}\overset{(1)}{=}\frac{\delta }{\delta y^{a}}-v^{b}%
\left[ \nabla _{b}B_{a}^{r}-B_{a.b}^{r}+\Gamma _{ab}^{c}B_{c}^{r}\right] 
\frac{\partial }{\partial u^{r}}= \\
\left( \frac{\delta }{\delta y^{a}}+v^{b}B_{a.b}^{r}\frac{\partial }{%
\partial u^{r}}\right) -v^{b}\Gamma _{ab}^{c}\frac{\partial }{\partial v^{c}}%
-v^{b}\nabla _{b}B_{a}^{r}\frac{\partial }{\partial u^{r}}= \\
\left( \frac{\delta }{\delta y^{a}}\right) ^{h_{M}}-v^{b}\nabla _{b}B_{a}^{r}%
\frac{\partial }{\partial u^{r}}=\left( \frac{\delta }{\delta y^{a}}\right)
^{h_{M}}-v^{b}\nabla _{b}B_{a}^{r}\left( \frac{\partial }{\partial x^{r}}%
\right) ^{v_{N}}.
\end{multline*}
\end{proof}

We define the vertical vector field%
\begin{equation*}
K_{a}=v^{b}\nabla _{b}B_{a}^{r}\left( \frac{\partial }{\partial x^{r}}%
\right) ^{v_{N}}=K_{a}^{r}\left( \frac{\partial }{\partial x^{r}}\right)
^{v_{N}}.
\end{equation*}

\begin{corollary}
If $M$ is a totally geodesic submanifolds in $N,$ then the horizontal lifts $%
h_{N}$ and $h_{M}$ coincide on $TM$.
\end{corollary}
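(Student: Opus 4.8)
The plan is to read this off directly from Lemma \ref{Lifts of coordinate vf} together with the geometric meaning of being totally geodesic. The lemma gives, for each coordinate vector field,
\begin{equation*}
\left( \frac{\delta }{\delta y^{a}}\right) ^{h_{N}}=\left( \frac{\delta }{\delta y^{a}}\right) ^{h_{M}}-v^{b}\nabla _{b}B_{a}^{r}\left( \frac{\partial }{\partial x^{r}}\right) ^{v_{N}}=\left( \frac{\delta }{\delta y^{a}}\right) ^{h_{M}}-K_{a},
\end{equation*}
so the entire discrepancy between the two horizontal lifts is carried by the vertical field $K_{a}$. Thus the whole statement reduces to showing that $K_{a}$ vanishes identically along $TM$ under the hypothesis.

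First I would recall the definition of total geodesy given in the preliminaries: $M$ is totally geodesic precisely when the second fundamental form $h$ vanishes, i.e. $h_{ab}^{x}=0$ for all $a,b,x$. Next I would invoke the decomposition established earlier, $\nabla _{b}B_{a}^{r}\partial _{r}=h_{ab}^{x}N_{x}^{r}\partial _{r}$, which expresses the van der Waerden–Bertolotti covariant derivative in terms of $h$. Since the normal vectors $N_{x}^{r}\partial _{r}$ are linearly independent, the vanishing of every $h_{ab}^{x}$ forces $\nabla _{b}B_{a}^{r}=0$ for all $a,b,r$. Substituting this into the definition $K_{a}=v^{b}\nabla _{b}B_{a}^{r}\left( \partial /\partial x^{r}\right) ^{v_{N}}$ yields $K_{a}=0$.

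Finally I would conclude that $\left( \delta /\delta y^{a}\right) ^{h_{N}}=\left( \delta /\delta y^{a}\right) ^{h_{M}}$ for every index $a$. Because both horizontal-lift operations are $\mathbb{R}$-linear in the lifted vector and the fields $\delta /\delta y^{a}$ span the tangent spaces of $M$, this equality on the coordinate basis extends to an equality $X^{h_{N}}=X^{h_{M}}$ for every vector $X$ tangent to $M$, which is the assertion of the corollary. I do not anticipate a genuine obstacle here, as the result is an immediate specialization of the lemma; the only point requiring minor care is the linearity argument that promotes the statement from coordinate fields to arbitrary tangent vectors, and the observation that $\nabla_b B_a^r = 0$ follows coefficient-wise from the independence of the normals $N_x^r\partial_r$.
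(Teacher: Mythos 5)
Your proposal is correct and follows exactly the route the paper intends: the corollary is stated as an immediate consequence of Lemma \ref{Lifts of coordinate vf}, since total geodesy ($h_{ab}^{x}=0$) together with the decomposition $\nabla _{b}B_{a}^{r}\partial _{r}=h_{ab}^{x}N_{x}^{r}\partial _{r}$ forces $\nabla _{b}B_{a}^{r}=0$, killing the vertical correction term $K_{a}$. Your added care about linear independence of the normals and the pointwise linearity of the lift operations is sound and merely makes explicit what the paper leaves implicit.
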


\section{Projections}

\begin{lemma}
Let $f:M\longrightarrow N$ be an isometric immersion of Riemannian manifolds
and $\widetilde{f}:TM\longrightarrow TN$ be its lift defined by (\ref{Lift 2}%
). Then the projections $\pi _{N}:TN\longrightarrow N$ and $\pi
_{M}:TM\longrightarrow M$ satisfy%
\begin{equation*}
d\left( \pi _{N}\right) |_{TTM}=d\left( \pi _{M}\right) .
\end{equation*}
\end{lemma}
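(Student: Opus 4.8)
The plan is to deduce the identity directly from the commutativity of the defining diagram, and then to confirm it by evaluating both sides on the coordinate frame $\left(\frac{\partial}{\partial y^a},\frac{\partial}{\partial v^a}\right)$ of $T(TM)$, whose images under $d\widetilde{f}$ have already been recorded above.

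First I would observe that the diagram accompanying (\ref{Lift 2}) commutes, i.e. $\pi_N\circ\widetilde{f}=f\circ\pi_M$. This is immediate from the local expressions: starting from $(y^a,v^a)\in TM$, the map $\widetilde{f}$ produces $(x^r(y^a),\,v^aB_a^r)$, and $\pi_N$ then forgets the fibre coordinate, leaving $(x^r(y^a))$; the same point is obtained by first projecting with $\pi_M$ to $(y^a)$ and then immersing via $f$. Applying the chain rule to this equality of maps yields $d\pi_N\circ d\widetilde{f}=df\circ d\pi_M$, which is exactly the asserted relation once $T(TM)$ is identified with its image $d\widetilde{f}\bigl(T(TM)\bigr)\subset T(TN)$ and the base $TM$ is identified, fibrewise via $df$, with $df(TM)\subset TN$ (the identification sending $\frac{\partial}{\partial y^a}$ to $\frac{\delta}{\delta y^a}=B_a^r\partial_r$).

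To make this transparent at the level of frames, I would evaluate $d\pi_N$ on the pushed-forward coordinate vectors. Since $\pi_N$ in coordinates reads $(x^r,u^r)\mapsto(x^r)$, one has $d\pi_N\bigl(\frac{\partial}{\partial x^r}\bigr)=\frac{\partial}{\partial x^r}$ and $d\pi_N\bigl(\frac{\partial}{\partial u^r}\bigr)=0$. Using the formulas for the tangent vectors to $TM$ displayed before Definition \ref{Lift of the immersion}, namely $d\widetilde{f}\bigl(\frac{\partial}{\partial y^a}\bigr)=\frac{\delta}{\delta y^a}+v^bB_{a.b}^r\frac{\partial}{\partial u^r}$ and $d\widetilde{f}\bigl(\frac{\partial}{\partial v^a}\bigr)=B_a^r\frac{\partial}{\partial u^r}$, I obtain $d\pi_N\bigl(d\widetilde{f}(\frac{\partial}{\partial y^a})\bigr)=\frac{\delta}{\delta y^a}$ and $d\pi_N\bigl(d\widetilde{f}(\frac{\partial}{\partial v^a})\bigr)=0$. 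On the other side $\pi_M$ reads $(y^a,v^a)\mapsto(y^a)$, so $d\pi_M\bigl(\frac{\partial}{\partial y^a}\bigr)=\frac{\partial}{\partial y^a}$ and $d\pi_M\bigl(\frac{\partial}{\partial v^a}\bigr)=0$, and under the identification $\frac{\partial}{\partial y^a}\leftrightarrow\frac{\delta}{\delta y^a}$ the two computations agree on every basis vector. Since $\left(\frac{\partial}{\partial y^a},\frac{\partial}{\partial v^a}\right)$ spans $T(TM)$, the equality $d(\pi_N)|_{TTM}=d(\pi_M)$ follows by linearity.

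I do not expect any genuine obstacle here; the content is essentially functoriality of $d(\cdot)$ applied to a commuting square. The only point requiring care is bookkeeping: one must keep straight the two identifications, namely that of $T(TM)$ inside $T(TN)$ and that of the base $TM$ inside $TN$, so that both sides of the claimed equation land in the same space. Once these identifications are fixed as above, the verification on the adapted coordinate frame is routine.
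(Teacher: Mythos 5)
Your proposal is correct, and its verification half is in fact the paper's own proof: the author likewise writes $\pi_N$ and $\pi_M$ in coordinates, notes $d\pi_N\bigl(\frac{\partial}{\partial x^r}\bigr)=\frac{\partial}{\partial x^r}$, $d\pi_N\bigl(\frac{\partial}{\partial u^r}\bigr)=0$ (and the analogues for $\pi_M$), and evaluates both differentials on the frame $\left(\frac{\partial}{\partial y^c},\frac{\partial}{\partial v^c}\right)$ of $TTM$, obtaining $\frac{\delta}{\delta y^c}$ and $0$ on both sides. What you add, and what the paper does not say, is the structural explanation: the identity is the chain rule applied to the commuting square $\pi_N\circ\widetilde{f}=f\circ\pi_M$, which is immediate from the local expression of $\widetilde{f}$ in (\ref{Lift 2}), giving $d\pi_N\circ d\widetilde{f}=df\circ d\pi_M$; this is exactly the lemma once $TTM$ is identified with $d\widetilde{f}(TTM)$ and $TM$ with $df(TM)$ via $\frac{\partial}{\partial y^a}\mapsto\frac{\delta}{\delta y^a}=B_a^r\partial_r$. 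This packaging buys two things the paper's computation does not make visible: it shows the result holds for any lift making the diagram commute (the specific form $u^r=v^aB_a^r$ is irrelevant to this lemma, entering only through commutativity), and it forces one to state explicitly the two identifications under which the asserted equality of maps makes sense, which the paper leaves implicit. Both arguments are complete; yours is the more conceptual, the paper's the more self-contained coordinate check.
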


\begin{proof}
The components of the projections: $\pi _{N}:TN\longrightarrow N$ and $\pi
_{M}:TM\longrightarrow M$ can be written as%
\begin{equation*}
\left( \pi _{N}\right) ^{s}(x^{r},u^{r})=x^{s},\quad \left( \pi _{M}\right)
^{b}(y^{a},v^{a})=y^{b},
\end{equation*}
where $r=1,...,n,$ $a=1,...,m.$ Thus for an arbitrary fixed\ indices $s,$ $b$
we obtain%
\begin{equation*}
\frac{\partial \left( \pi _{N}\right) ^{r}}{\partial x^{t}}=\delta
_{t}^{r},\quad \frac{\partial \left( \pi _{N}\right) ^{r}}{\partial u^{t}}%
=0,\quad \frac{\partial \left( \pi _{M}\right) ^{a}}{\partial y^{b}}=\delta
_{b}^{a},\quad \frac{\partial \left( \pi _{M}\right) ^{a}}{\partial v^{b}}=0.
\end{equation*}%
Then for vectors $\frac{\delta }{\delta y^{c}},$ $c=1,...,m,$ tangent to $M,$
and $\frac{\partial }{\partial y^{c}}$ tangent to $TM$ we find%
\begin{equation*}
d\pi _{M}|_{(x^{r}(y^{a}),v^{b}B_{b}^{r})}\left( \frac{\partial }{\partial
y^{c}}\right) =\frac{\partial \left( \pi _{M}\right) ^{a}}{\partial y^{c}}%
\frac{\delta }{\delta y^{a}}=\delta _{c}^{a}\frac{\delta }{\delta y^{a}}%
|_{(x^{r}(y^{a}))}=\frac{\delta }{\delta y^{c}}|_{(x^{r}(y^{a}))},
\end{equation*}%
and%
\begin{multline*}
d\pi _{N}|_{(x^{r}(y^{a}),v^{b}B_{b}^{r})}\left( \frac{\partial }{\partial
y^{c}}\right) =d\pi _{N}|_{(x^{r}(y^{a}),v^{b}B_{b}^{r})}\left( B_{c}^{r}%
\frac{\partial }{\partial x^{r}}+v^{b}B_{b}^{r}\frac{\partial }{\partial
u^{r}}\right) = \\
B_{c}^{r}\left[ d\pi _{N}|_{(x^{r}(y^{a}),v^{b}B_{b}^{r})}\left( \frac{%
\partial }{\partial x^{r}}\right) \right] +v^{b}B_{b}^{r}\left[ d\pi
_{N}|_{(x^{r}(y^{a}),v^{b}B_{b}^{r})}\left( \frac{\partial }{\partial u^{r}}%
\right) \right] = \\
B_{c}^{r}\left( \frac{\partial \left( \pi _{N}\right) ^{s}}{\partial x^{r}}%
\frac{\partial }{\partial x^{s}}\right)
|_{(x^{r}(y^{a}))}+v^{b}B_{b}^{r}\left( \frac{\partial \left( \pi
_{N}\right) ^{s}}{\partial u^{r}}\frac{\partial }{\partial x^{s}}\right)
|_{(x^{r}(y^{a}))}= \\
B_{c}^{r}\left( \delta _{r}^{s}\frac{\partial }{\partial x^{s}}\right)
|_{(x^{r}(y^{a}))}=B_{c}^{r}\frac{\partial }{\partial x^{s}}%
|_{(x^{r}(y^{a}))}=\frac{\delta }{\delta y^{c}}|_{(x^{r}(y^{a}))}.
\end{multline*}%
Similarly, for $\frac{\partial }{\partial v^{c}}$ tangent to $TM,$ we get%
\begin{equation*}
d\pi _{M}|_{(x^{r}(y^{a}),v^{b}B_{b}^{r})}\left( \frac{\partial }{\partial
v^{c}}\right) =\frac{\partial \left( \pi _{M}\right) ^{a}}{\partial v^{c}}%
\frac{\delta }{\delta y^{a}}=0,
\end{equation*}%
and%
\begin{multline*}
d\pi _{N}|_{(x^{r}(y^{a}),v^{b}B_{b}^{r})}\left( \frac{\partial }{\partial
v^{c}}\right) =d\pi _{N}|_{(x^{r}(y^{a}),v^{b}B_{b}^{r})}\left( B_{c}^{r}%
\frac{\partial }{\partial u^{r}}\right) = \\
B_{c}^{r}\left[ d\pi _{N}|_{(x^{r}(y^{a}),v^{b}B_{b}^{r})}\left( \frac{%
\partial }{\partial u^{r}}\right) \right] =B_{c}^{r}\left( \frac{\partial
\left( \pi _{N}\right) ^{s}}{\partial u^{r}}\frac{\partial }{\partial x^{s}}%
\right) |_{(x^{r}(y^{a}))}=0.
\end{multline*}%
This completes the proof.
\end{proof}

\section{Connection map}

\begin{lemma}
Let $\widetilde{f}:TM\longrightarrow TN$ be the lift of the immersion $%
f:M\longrightarrow N$ in the sense of Definition \ref{Lift of the immersion}%
. Then the connection maps $K_{N}$ and $K_{M}$ with respect to the
connections $\nabla _{N}$ and $\nabla _{M}$ respectively satisfy%
\begin{equation*}
K_{N}(\frac{\partial }{\partial v^{a}})=\frac{\delta }{\delta y^{a}}=K_{M}(%
\frac{\partial }{\partial v^{a}}),
\end{equation*}%
\begin{equation*}
K_{N}(\frac{\partial }{\partial y^{a}})=v^{b}\nabla _{b}B_{a}^{r}\frac{%
\partial }{\partial x^{r}}+K_{M}(\frac{\partial }{\partial y^{a}}).
\end{equation*}
\end{lemma}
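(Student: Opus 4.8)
The plan is to apply the coordinate formula for the connection map recorded in the Introduction, namely $K_{(x,u)}(Z)=(\overline{Z}^{r}+u^{s}Z^{t}\Gamma_{st}^{r})\partial_{r}$ for $Z=Z^{r}\partial_{r}+\overline{Z}^{r}\frac{\partial}{\partial u^{r}}$, to each of the two basis fields $\frac{\partial}{\partial v^{a}}$ and $\frac{\partial}{\partial y^{a}}$ of $TTM$, once with the ambient data (Christoffel symbols $\Gamma_{st}^{r}$ and fibre coordinate $u^{s}=v^{c}B_{c}^{s}$ dictated by the lift (\ref{Lift 2})) to obtain $K_{N}$, and once with the induced data (Christoffel symbols $\Gamma_{ab}^{c}$ and fibre coordinate $v^{b}$) to obtain $K_{M}$. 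The two pushforward expressions $\frac{\partial}{\partial y^{a}}=\frac{\delta}{\delta y^{a}}+v^{b}B_{a.b}^{r}\frac{\partial}{\partial u^{r}}$ and $\frac{\partial}{\partial v^{a}}=B_{a}^{r}\frac{\partial}{\partial u^{r}}$ derived just before Definition \ref{Lift of the immersion} supply the base and fibre components $Z^{r},\overline{Z}^{r}$ needed in the formula.

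First I would treat $\frac{\partial}{\partial v^{a}}$, which is purely vertical: its base component vanishes, so the $\Gamma$-term drops out for both connections, giving $K_{N}(\frac{\partial}{\partial v^{a}})=B_{a}^{r}\partial_{r}=\frac{\delta}{\delta y^{a}}$ and likewise $K_{M}(\frac{\partial}{\partial v^{a}})=\frac{\delta}{\delta y^{a}}$, which is the first displayed identity at once.

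Next I would handle $\frac{\partial}{\partial y^{a}}$. For $K_{N}$ the base component is $B_{a}^{r}$ and the fibre component is $v^{b}B_{a.b}^{r}$; substituting $u^{s}=v^{b}B_{b}^{s}$ into the formula yields $K_{N}(\frac{\partial}{\partial y^{a}})=v^{b}(B_{a.b}^{r}+\Gamma_{st}^{r}B_{a}^{t}B_{b}^{s})\partial_{r}$. The key algebraic step is to recognize, after using the symmetry $\Gamma_{st}^{r}=\Gamma_{ts}^{r}$ to rewrite $\Gamma_{st}^{r}B_{a}^{t}B_{b}^{s}=\Gamma_{st}^{r}B_{a}^{s}B_{b}^{t}$, that the definition (\ref{Sub 2}) of the van der Waerden--Bertolotti derivative gives $B_{a.b}^{r}+\Gamma_{st}^{r}B_{a}^{s}B_{b}^{t}=\nabla_{b}B_{a}^{r}+\Gamma_{ab}^{c}B_{c}^{r}$. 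Hence the right-hand side splits as $v^{b}\nabla_{b}B_{a}^{r}\,\partial_{r}+v^{b}\Gamma_{ab}^{c}(B_{c}^{r}\partial_{r})$, and since $B_{c}^{r}\partial_{r}=\frac{\delta}{\delta y^{c}}$ the second summand is exactly $v^{b}\Gamma_{ab}^{c}\frac{\delta}{\delta y^{c}}$. Computing $K_{M}(\frac{\partial}{\partial y^{a}})$ directly from the $\nabla_{M}$ version of the formula (base component $\delta_{a}^{e}$, vanishing fibre component) gives precisely $v^{b}\Gamma_{ab}^{c}\frac{\delta}{\delta y^{c}}$, so combining the two computations produces the second identity.

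The computation is entirely mechanical once the correct substitutions are in place; I expect no genuine obstacle beyond the index discipline, that is, keeping straight which connection, fibre coordinate, and index range feeds each connection map, and invoking the symmetry $\Gamma_{st}^{r}=\Gamma_{ts}^{r}$ to align the ambient $\Gamma$-term with the term appearing in (\ref{Sub 2}).
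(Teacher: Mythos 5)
Your proposal is correct and follows essentially the same route as the paper's own proof: both apply the coordinate formulas for $K_{N}$ and $K_{M}$ to the pushforward expressions $\frac{\partial }{\partial v^{a}}=B_{a}^{r}\frac{\partial }{\partial u^{r}}$ and $\frac{\partial }{\partial y^{a}}=B_{a}^{r}\frac{\partial }{\partial x^{r}}+v^{b}B_{a.b}^{r}\frac{\partial }{\partial u^{r}}$, substitute $u^{r}=v^{b}B_{b}^{r}$ from (\ref{Lift 2}), and use (\ref{Sub 2}) to split off $v^{b}\nabla _{b}B_{a}^{r}\partial _{r}$ plus the term $v^{b}\Gamma _{ab}^{c}\frac{\delta }{\delta y^{c}}$ identified as $K_{M}(\frac{\partial }{\partial y^{a}})$. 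The only cosmetic difference is that you invoke the symmetry $\Gamma _{st}^{r}=\Gamma _{ts}^{r}$ explicitly to align the index pairing, a step the paper performs tacitly.
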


\begin{proof}
By definition we have%
\begin{equation*}
K_{N}:T_{(x,u)}TN\longrightarrow T_{x}N,\quad K_{N}(X^{r}\frac{\partial }{%
\partial x^{r}}+\overline{X}^{r}\frac{\partial }{\partial u^{r}})=\left( 
\overline{X}^{r}+\Gamma _{st}^{r}u^{s}X^{t}\right) \frac{\partial }{\partial
x^{r}},
\end{equation*}%
\begin{equation*}
K_{M}:T_{(y,v)}TM\longrightarrow T_{y}M,\quad K_{M}(Z^{a}\frac{\partial }{%
\partial y^{a}}+\overline{Z}^{a}\frac{\partial }{\partial v^{a}})=\left( 
\overline{Z}^{a}+\Gamma _{bc}^{a}v^{b}Z^{c}\right) \frac{\delta }{\delta
y^{a}}.
\end{equation*}%
Hence, we obtain%
\begin{equation*}
K_{N}(\frac{\partial }{\partial v^{a}})=K_{N}(B_{a}^{r}\frac{\partial }{%
\partial u^{r}})=B_{a}^{r}\frac{\partial }{\partial x^{r}}=\frac{\delta }{%
\delta y^{a}}=K_{M}(\frac{\partial }{\partial v^{a}}).
\end{equation*}%
Moreover,%
\begin{equation*}
K_{M}(\frac{\partial }{\partial y^{a}})=\Gamma _{db}^{c}v^{b}\delta _{a}^{d}%
\frac{\delta }{\delta y^{c}}=\Gamma _{ab}^{c}v^{b}B_{c}^{t}\frac{\partial }{%
\partial x^{t}},
\end{equation*}%
and, by the use of (\ref{Sub 2}) and (\ref{Lift 2}), we find 
\begin{multline*}
K_{N}(\frac{\partial }{\partial y^{a}})=K_{N}(B_{a}^{r}\frac{\partial }{%
\partial x^{r}}+v^{b}\partial _{b}B_{a}^{r}\frac{\partial }{\partial u^{r}}%
)=\left( v^{b}\partial _{b}B_{a}^{r}+\Gamma _{st}^{r}B_{a}^{s}u^{t}\right) 
\frac{\partial }{\partial x^{r}}= \\
v^{b}\left( \partial _{b}B_{a}^{r}+\Gamma _{st}^{r}B_{a}^{s}B_{b}^{t}\right) 
\frac{\partial }{\partial x^{r}}=v^{b}\left( \nabla _{b}B_{a}^{r}+\Gamma
_{ab}^{c}B_{c}^{r}\right) \frac{\partial }{\partial x^{r}}= \\
v^{b}\nabla _{b}B_{a}^{r}\frac{\partial }{\partial x^{r}}+v^{b}\Gamma
_{ab}^{c}\frac{\delta }{\delta y^{c}}=v^{b}\nabla _{b}B_{a}^{r}\frac{%
\partial }{\partial x^{r}}+K_{M}(\frac{\partial }{\partial y^{a}}).
\end{multline*}%
Thus the Lemma is proved.
\end{proof}

\section{Vector field normal to $LM$}

In the case of $M$ being totally geodesic in $N,$ the unit vector fields
normal to the lift of $M$ can be chosen in the form $\alpha \eta
_{x}^{h_{N}}+\beta \eta _{x}^{h_{N}},$ $x=m+1,...,n,$ while $\alpha ,$ $%
\beta $ are functions depending on generators of the $g-$ natural metric $G$
along the lift. The next lemma explains the structure of the vector field
normal to $LM$ in the general case.

\begin{lemma}
Suppose that $M\ $is not necessary totally geodesic in $N\ $and $\ \eta
=H_{\top }^{h_{N}}+H_{\bot }^{h_{N}}+V_{\top }^{v_{N}}+V_{\bot }^{v_{N}}$ is
a vector field normal to the lifted submanifold $LM$, where $H_{\top },$ $%
V_{\top }$ are tangent to $M$ and $H_{\bot },$ $V_{\bot }$ are normal to $M$
in $TN.$ Then for all $u$ tangent to $M\ $we have%
\begin{equation*}
g(H_{\top },u)=-\frac{F_{1}}{F}g(K,\ a_{2}H_{\bot }+a_{1}V_{\bot }),
\end{equation*}%
\begin{equation*}
g(V_{\top },u)=\frac{F_{2}}{F}g(K,\ a_{2}H_{\bot }+a_{1}V_{\bot }),
\end{equation*}%
where $K=v^{c}K_{c}=v^{c}v^{a}\nabla _{c}B_{a}^{r}\partial _{r}.$ Moreover,
if $a_{1}=const,$ $a_{2}=const,$ then%
\begin{equation*}
g(H_{\top },\delta _{a})=-\frac{F_{1}}{F}g(K_{a},\ a_{2}H_{\bot
}+a_{1}V_{\bot }),
\end{equation*}%
\begin{equation*}
g(V_{\top },\delta _{a})=\frac{F_{2}}{F}g(K_{a},\ a_{2}H_{\bot
}+a_{1}V_{\bot }).
\end{equation*}
\end{lemma}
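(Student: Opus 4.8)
The plan is to express the normality condition $G(\eta,\cdot)=0$ against the two natural tangent directions to $LM$, namely $\left(\frac{\delta}{\delta y^a}\right)^{v_M}$ and $\left(\frac{\delta}{\delta y^a}\right)^{h_M}$, and then translate these into equations over the base using the $g$-natural metric formulas from Lemma~1 together with the lift relations from Lemma~\ref{Lifts of coordinate vf}. The key observation is that the tangent space to $LM$ is spanned by the vertical and horizontal lifts \emph{with respect to $TM$}, while $\eta$ is written in terms of lifts with respect to $TN$; so the whole computation hinges on converting one set of lifts into the other.

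First I would use Lemma~\ref{Lifts of coordinate vf} to write the $TM$-lifts in $TN$-language: $\left(\frac{\delta}{\delta y^a}\right)^{v_M}=\left(\frac{\delta}{\delta y^a}\right)^{v_N}=\delta_a^{v_N}$ and $\left(\frac{\delta}{\delta y^a}\right)^{h_M}=\delta_a^{h_N}+K_a$, where $K_a=v^b\nabla_b B_a^r\,\partial_r^{v_N}$ is the vertical field already introduced. The two normality conditions then read
\begin{equation*}
G(\eta,\delta_a^{v_N})=0,\qquad G(\eta,\delta_a^{h_N}+K_a)=0.
\end{equation*}
Next I would expand each pairing by substituting $\eta=H_\top^{h_N}+H_\bot^{h_N}+V_\top^{v_N}+V_\bot^{v_N}$ and applying the three inner-product formulas of Lemma~1 term by term. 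Here the splitting into tangent and normal parts does the essential bookkeeping: since $H_\bot,V_\bot$ are normal to $M$ in $N$ while $\delta_a$, and hence $K_a$ (being a combination of the $\partial_r$ paired against tangent directions), interact with them only through the second fundamental form, the $H_\bot,V_\bot$ contributions survive precisely in the combination $a_2 H_\bot+a_1 V_\bot$ appearing in the statement.

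The main computation is to collect the surviving terms. The first condition $G(\eta,\delta_a^{v_N})=0$, after discarding the $b_j$-terms that vanish against the appropriate directions, should yield a linear relation between $g(H_\top,\delta_a)$, $g(V_\top,\delta_a)$ and $g(K,\,a_2H_\bot+a_1V_\bot)$; the second condition, using that $G(\delta_a^{h_N},\cdot)$ and $G(K_a,\cdot)$ contribute through the $a_2,a_3$ and $a_1$ coefficients, gives a second independent relation. The pair of equations is then a $2\times2$ linear system in the unknowns $g(H_\top,u)$ and $g(V_\top,u)$ whose coefficient matrix has determinant built from $F_1$, $F_2$ and $F_3$; inverting it (using $F=F_1(F_1+F_3)-F_2^2\neq0$, guaranteed by non-degeneracy via Lemma~\ref{Lemma 9}) produces exactly the claimed expressions with the factors $-F_1/F$ and $F_2/F$. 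The final assertion for $\delta_a$ under $a_1,a_2=\mathrm{const}$ follows by repeating the argument with $K_a$ in place of $K=v^c K_c$, since the constancy of $a_1,a_2$ is what lets one strip off the summation over $c$ and isolate each $\delta_a$ separately.

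The hard part will be the careful accounting of which $b_j$-terms and which second-fundamental-form contributions vanish: one must verify that the pairings of $\eta$ against the vertical and horizontal directions really do reduce to the single vector $a_2H_\bot+a_1V_\bot$ paired with $K$, rather than producing extra terms involving $H_\bot,V_\bot$ separately. Getting the coefficient matrix of the $2\times2$ system correct—and checking that its determinant is precisely $F$ up to the factor $F_1$—is where the $g$-natural structure of Lemma~1 must be used most delicately.
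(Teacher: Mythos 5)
Your plan is essentially identical to the paper's own proof: both impose the normality conditions $G(\eta ,\delta _{a}^{v_{N}})=0$ and $G(\eta ,\delta _{a}^{h_{M}})=0$, rewrite $\delta _{a}^{h_{M}}=\delta _{a}^{h_{N}}+K_{a}$ via Lemma \ref{Lifts of coordinate vf}, expand with the $g$-natural metric formulas (the normal components surviving only through $g(K_{a},\ a_{2}H_{\bot }+a_{1}V_{\bot })$), transvect with $v^{a}$, and solve the resulting $2\times 2$ system using $F=F_{1}(F_{1}+F_{3})-F_{2}^{2}\neq 0$. The only cosmetic inaccuracies are that the vertical condition produces no $K$-term at all (after contraction it is the homogeneous relation $F_{2}g(H_{\top },u)+F_{1}g(V_{\top },u)=0$, the source term $-g(K,\ a_{2}H_{\bot }+a_{1}V_{\bot })$ entering only through the horizontal condition), and that the scalar unknowns $g(H_{\top },u),$ $g(V_{\top },u)$ arise only after contracting the index-$a$ relations with $v^{a}$ --- exactly the steps the paper carries out.
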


\begin{proof}
Relation $G(\delta _{a}^{v_{N}},\eta )=0$ yields%
\begin{equation*}
a_{2}H_{\top }+b_{2}g(H_{\top },u)u+a_{1}V_{\top }+b_{1}g(V_{\top },u)u=0,
\end{equation*}%
whence, by contraction with $u,$ we get 
\begin{equation}
F_{2}g(H_{\top },u)+F_{1}g(V_{\top },u)=0.  \label{Vfn to LM 2}
\end{equation}

On the other hand, relations $G(\delta _{a}^{h_{M}},\eta )=0$ and $%
h_{M}=h_{N}+v_{N}$ yield%
\begin{equation}
g\left( \delta _{a},\ AH_{\top }+Bg(H_{\top },u)u+a_{2}V_{\top
}+b_{2}g(V_{\top },u)u\right) +g(K_{a},\ a_{2}H_{\bot }+a_{1}V_{\bot })=0,
\label{Vfn to LM 3}
\end{equation}%
where $K_{a}=v^{c}\nabla _{c}B_{a}^{r}\partial _{r}.$ Transvecting (\ref{Vfn
to LM 3}) with $v^{a},$ we obtain%
\begin{equation}
\left( F_{1}+F_{3}\right) g(H_{\top },u)+F_{2}g(V_{\top },u)=-g(K,\
a_{2}H_{\bot }+a_{1}V_{\bot }),  \label{Vfn to LM 4}
\end{equation}%
where $K=v^{c}K_{c}.$

Since $G\ $is non-degenerate, $F=F_{1}(F_{1}+F_{3})-F_{2}^{2}\neq 0.$
Solving the system consisting of (\ref{Vfn to LM 2}) and (\ref{Vfn to LM 4})
with respect to $g(H_{\top },u)\ $and $g(H_{\top },u)$ we obtain the thesis.
\end{proof}

Consequently, if $M$ is totally geodesic in $N,$ then necessary $H_{\top
}=V_{\top }=0.$

\section{Lift of a totally geodesic submanifold}

\subsection{Normal bundle}

Suppose that $M$ is a totally geodesic submanifold isometricaly immersed in $%
N$ and $\eta _{x},$ $x=m+1,...,n$ are vector fields normal to $M$ in $TN.$
Then, by the Lemma \ref{Lifts of coordinate vf}, the lifts of the vector
fields from $M$ to $TM$ coincide with those to $TN.$ The lifts $\left( \eta
_{x}\right) ^{h}=\left( \eta _{x}\right) ^{h_{N}}$ and $\left( \eta
_{x}\right) ^{v}=\left( \eta _{x}\right) ^{v_{N}}$ are orthogonal to $\left( 
\frac{\delta }{\delta y^{a}}\right) ^{h_{M}}$ and $\left( \frac{\delta }{%
\delta y^{b}}\right) ^{v_{M}}$ but are orthogonal to each other if and only
if $a_{2}=0$ since $G\left( \left( \eta _{x}\right) ^{h},\left( \eta
_{y}\right) ^{v}\right) =a_{2}g(\eta _{x},\eta _{y})$ for all $%
x,y=m+1,...,n. $

\begin{proposition}
\label{Normal vf - totg case}Let $M,$ $\dim M=m,$ be a totally geodesic
submanifold isometrically immersed in a Riemannian manifold $(N,g),$ $\dim
N=n$ and $\eta _{x},$ $x=m+1,...,n$ be a set of vector fields normal to $M$
in $TN.$ Suppose, moreover, that $TN$ is endowed with $g-$ natural metric $G$
and $a=a_{1}A-a_{2}^{2}\neq 0.$

If $LM$ denotes the lift (\ref{Lift 2}) of $M$ to $TN,$ then the normal
bundle of $LM$ in $TTN$ is spanned by vector fields $S_{x},$ $T_{x},$ $%
x=m+1,...,n$ and the following six cases occur:

\begin{enumerate}
\item $Aa_{1}\neq 0,$ $a_{2}$ arbitrary. Let $\varepsilon
=sgn(a_{1}A-a_{2}^{2}),$ $\delta =sgna_{1}$ and%
\begin{equation*}
S_{x}=\frac{\varepsilon \delta \sqrt{\left\vert a_{1}\right\vert }}{\sqrt{%
\left\vert a\right\vert }}\left( \eta _{x}\right) ^{h}-\frac{\varepsilon
\delta a_{2}\sqrt{\left\vert a_{1}\right\vert }}{a_{1}\sqrt{\left\vert
a\right\vert }}\left( \eta _{x}\right) ^{v},\text{ }T_{x}=\frac{\delta }{%
\sqrt{\left\vert a_{1}\right\vert }}\left( \eta _{x}\right) ^{v}.
\end{equation*}%
Then%
\begin{equation*}
G(S_{x},S_{y})=\varepsilon \delta g(\eta _{x},\eta _{y}),\quad
G(S_{x},T_{y})=0,\quad G(T_{x},T_{y})=\delta g(\eta _{x},\eta _{y}).
\end{equation*}

\item $a_{2}\neq 0,$ $A=0,$ $a_{1}=0,\varepsilon =sgna_{2}$ and%
\begin{equation*}
S_{x}=\frac{\varepsilon }{2a_{2}}\eta _{x}^{h}-\eta _{x}^{v},\text{ }T_{x}=%
\frac{\varepsilon }{2a_{2}}\eta _{x}^{h}+\eta _{x}^{v}.
\end{equation*}%
Then%
\begin{equation*}
G(S_{x},S_{y})=-\varepsilon g(\eta _{x},\eta _{y}),\quad
G(S_{x},T_{y})=0,\quad G(T_{x},T_{y})=\varepsilon g(\eta _{x},\eta _{y}).
\end{equation*}

\item $a_{2}\neq 0,$ $A=0,$ $a_{1}\neq 0,$ $\varepsilon =sgna_{1}=-1$ and%
\begin{equation*}
S_{x}=\frac{a_{1}}{\sqrt{3}\sqrt{\left\vert a_{1}\right\vert }a_{2}}\eta
_{x}^{h}+\frac{1}{\sqrt{3}\sqrt{\left\vert a_{1}\right\vert }}\eta
_{x}^{v},\quad T_{x}=\frac{2a_{1}}{\sqrt{3}\sqrt{\left\vert a_{1}\right\vert 
}a_{2}}\eta _{x}^{h}-\frac{1}{\sqrt{3}\sqrt{\left\vert a_{1}\right\vert }}%
\eta _{x}^{v}.
\end{equation*}%
Then%
\begin{equation*}
G(S_{x},S_{y})=-g(\eta _{x},\eta _{y}),\quad G(S_{x},T_{y})=0,\quad
G(T_{x},T_{y})=g(\eta _{x},\eta _{y}).
\end{equation*}

\item $a_{2}\neq 0,$ $A=0,$ $a_{1}\neq 0,$ $\varepsilon =sgna_{1}=1$ and%
\begin{equation*}
S_{x}=\frac{a_{1}}{\sqrt{a_{1}}a_{2}}\eta _{x}^{h}-\frac{1}{\sqrt{a_{1}}}%
\eta _{x}^{v},\quad T_{x}=\frac{1}{\sqrt{a_{1}}}\eta _{x}^{v}.
\end{equation*}%
Then%
\begin{equation*}
G(S_{x},S_{y})=-g(\eta _{x},\eta _{y}),\quad G(S_{x},T_{y})=0,\quad
G(T_{x},T_{y})=g(\eta _{x},\eta _{y}).
\end{equation*}

\item $a_{2}\neq 0,$ $A\neq 0,$ $a_{1}=0,$ $\varepsilon =sgnA=-1$ and%
\begin{equation*}
S_{x}=\frac{A}{\sqrt{3}\sqrt{\left\vert A\right\vert }a_{2}}\eta _{x}^{v}+%
\frac{1}{\sqrt{3}\sqrt{\left\vert A\right\vert }}\eta _{x}^{h},\quad T_{x}=%
\frac{2A}{\sqrt{3}\sqrt{\left\vert A\right\vert }a_{2}}\eta _{x}^{v}-\frac{1%
}{\sqrt{3}\sqrt{\left\vert A\right\vert }}\eta _{x}^{h}.
\end{equation*}%
Then%
\begin{equation*}
G(S_{x},S_{y})=-g(\eta _{x},\eta _{y}),\quad G(S_{x},T_{y})=0,\quad
G(T_{x},T_{y})=g(\eta _{x},\eta _{y}).
\end{equation*}

\item $a_{2}\neq 0,$ $A\neq 0,$ $a_{1}=0,$ $\varepsilon =sgnA=1$ and%
\begin{equation*}
S_{x}=\frac{A}{\sqrt{A}a_{2}}\eta _{x}^{v}-\frac{1}{\sqrt{A}}\eta
_{x}^{h},\quad T_{x}=\frac{1}{\sqrt{A}}\eta _{x}^{h}.
\end{equation*}%
Then%
\begin{equation*}
G(S_{x},S_{y})=-g(\eta _{x},\eta _{y}),\quad G(S_{x},T_{y})=0,\quad
G(T_{x},T_{y})=g(\eta _{x},\eta _{y}).
\end{equation*}
\end{enumerate}

If $\eta _{x}$ are unit, so do $S_{x}$ and $T_{x}.$

Moreover, only in the first case the metric induced on the normal bundle
from $G$ can be a Riemannian one.
\end{proposition}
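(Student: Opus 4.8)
The plan is to identify the normal bundle explicitly and then reduce the entire statement to the purely linear-algebraic problem of diagonalising a single symmetric $2\times 2$ form, which I would treat case by case.

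First I would record that, since $M$ is totally geodesic, $\nabla _{b}B_{a}^{r}=0$, hence $K_{a}=0$, and by Lemma \ref{Lifts of coordinate vf} the lifts $h_{N}$ and $h_{M}$ (and $v_{N}$, $v_{M}$) coincide along $TM$. Thus $T_{p}LM$ is spanned by the $2m$ vectors $\left( \delta _{a}\right) ^{h}$ and $\left( \delta _{a}\right) ^{v}$, $a=1,\dots ,m$. Because each $\eta _{x}$ is normal to $M$ and the base point $u=v^{a}B_{a}^{r}\partial _{r}$ is tangent to $M$, we have $g(\eta _{x},\delta _{a})=0$ and $g(\eta _{x},u)=0$. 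Substituting these into the three structure formulas for $G$ makes every product $G(\eta _{x}^{h},\delta _{a}^{h})$, $G(\eta _{x}^{h},\delta _{a}^{v})$, $G(\eta _{x}^{v},\delta _{a}^{h})$, $G(\eta _{x}^{v},\delta _{a}^{v})$ vanish, so each $\eta _{x}^{h}$ and $\eta _{x}^{v}$ is $G$-orthogonal to $LM$. Since $TTN$ has dimension $2n$ and $LM$ dimension $2m$, the normal bundle has rank $2(n-m)$, exactly the number of these lifts; a determinant computation (using $a\neq 0$ together with the non-degeneracy of $g$ on the normal space of $M$ in $N$) shows that they form a frame of the normal bundle.

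Next I would compute the Gram matrix of this frame. Using $g(\eta _{x},u)=0$ once more, the three products collapse to
\begin{equation*}
G(\eta _{x}^{h},\eta _{y}^{h})=A\,g(\eta _{x},\eta _{y}),\quad G(\eta _{x}^{h},\eta _{y}^{v})=a_{2}\,g(\eta _{x},\eta _{y}),\quad G(\eta _{x}^{v},\eta _{y}^{v})=a_{1}\,g(\eta _{x},\eta _{y}),
\end{equation*}
so the whole problem factors as $g(\eta _{x},\eta _{y})$ times the symmetric coefficient matrix with diagonal entries $A$, $a_{1}$ and off-diagonal entry $a_{2}$, whose determinant is exactly $a=a_{1}A-a_{2}^{2}\neq 0$. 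Writing each $S_{x}$, $T_{x}$ as a fixed combination $\alpha \eta _{x}^{h}+\beta \eta _{x}^{v}$, the three claimed identities amount to diagonalising this $2\times 2$ form while tracking signs. The six cases exhaust the possibilities under $a\neq 0$: case $1$ is $Aa_{1}\neq 0$, whereas $Aa_{1}=0$ forces $a_{2}\neq 0$ and yields the five remaining cases according to which of $A$, $a_{1}$ vanishes and the sign of the surviving entry. In case $1$ I would run Gram--Schmidt starting from the vertical lift (legitimate since $a_{1}\neq 0$): normalise $\eta _{x}^{v}$ to obtain $T_{x}$, subtract its $T$-component from $\eta _{x}^{h}$ and normalise to obtain $S_{x}$; the signs $\delta =\mathrm{sgn}\,a_{1}$ and $\varepsilon =\mathrm{sgn}\,a$ appear because $G(T_{x},T_{x})$ has the sign of $a_{1}$ and $G(S_{x},S_{x})$ the sign of $a/a_{1}=\varepsilon \delta $. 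In the degenerate cases one diagonal lift is null (e.g. $\eta _{x}^{v}$ is null when $a_{1}=0$), the form is indefinite with determinant $-a_{2}^{2}<0$, and I would pass to light-cone-type combinations with the stated coefficients, the $\sqrt{3}$ factors being the normalisations that turn the mixed products into exactly $\mp 1$. Each of the six assertions then reduces to a direct substitution into the three products.

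Finally, the unit claim is immediate: with $g(\eta _{x},\eta _{x})=1$ every diagonal entry becomes $\pm 1$, so $S_{x}$, $T_{x}$ are unit. For the Riemannian claim I would observe that in cases $2$--$6$ the two diagonal signs are always opposite, namely $G(S_{x},S_{x})=-g(\eta _{x},\eta _{x})$ and $G(T_{x},T_{x})=+g(\eta _{x},\eta _{x})$ up to the common factor $\varepsilon $ in case $2$, so the induced form is indefinite and cannot be Riemannian; only in case $1$, where the signs are $\varepsilon \delta $ and $\delta $, can both be positive, precisely when $a_{1}>0$ and $a>0$. The main obstacle is the bookkeeping in the degenerate cases: one must check that the non-canonical basis choices, especially the $\sqrt{3}$-normalised pairs, genuinely diagonalise the indefinite form, and confirm that the six cases are mutually exclusive and jointly exhaustive under the hypothesis $a\neq 0$.
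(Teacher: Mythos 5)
Your proposal is correct and takes essentially the same approach as the paper, which sets up exactly this computation in the paragraph preceding the proposition (total geodesy makes the two lifts coincide, $\eta _{x}^{h},\eta _{x}^{v}$ are $G$-orthogonal to $\delta _{a}^{h},\delta _{a}^{v}$, and $G(\eta _{x}^{h},\eta _{y}^{v})=a_{2}\,g(\eta _{x},\eta _{y})$) and then leaves the case-by-case $2\times 2$ diagonalisation implicit. Your Gram-matrix reduction, the exhaustiveness argument ($Aa_{1}=0$ together with $a\neq 0$ forces $a_{2}\neq 0$), and the signature discussion for the Riemannian claim supply precisely the verification the paper omits.
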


\subsection{Submanifold LM}

We abbreviate $\delta _{a}=\frac{\delta }{\delta y^{a}}.$ Denote by $%
\widetilde{\nabla }$ the Levi-Civita connection of the $g-$ natural metric $G
$ on $TN$ and by $\nabla $ the Levi-Civita connection on $N.$ Moreover, let $%
\widetilde{A}$ denotes the shape operator of the lifted submanifold $LM.$
Since $M$ is totally geodesic, the normal bundle of the lifted submanifold
is spanned by vector fields of the form $\alpha \eta _{x}^{h_{N}}+\beta \eta
_{x}^{v_{N}},$ $x=m+1,...,n,$ $\alpha =\alpha (v^{2}),$ $\beta =\beta (v^{2})
$ being functions depending on $v^{2}=g_{ab}v^{a}v^{b}$ and $\eta _{x}$ are
vector fields on $N$ normal to $M.$ Moreover we have $\delta
_{a}^{h_{N}}=\delta _{a}^{h_{M}.}.$ Setting $\delta _{a}^{h_{N}}=\delta
_{a}^{h}$, by the use of the Weingarten formula to the normal vector field $%
\eta $, we obtain along $LM$%
\begin{multline*}
-G\left( \widetilde{A}_{\eta ^{h}}\left( \delta _{a}^{h}\right) ,\delta
_{b}^{h}\right) =G\left( \widetilde{\nabla }_{\delta _{a}^{h}}\left( \eta
^{h}\right) ,\delta _{b}^{h}\right) = \\
Ag(\nabla _{\delta _{a}}\eta ,\delta _{a})+Bg(\nabla _{\delta _{a}}\eta
,u)g(\delta _{a},u)+ \\
Ag(\mathbf{A}(u,\delta _{a},\eta ),\delta _{a})+Bg(\mathbf{B}(u,\delta
_{a},\eta ),u)g(\delta _{a},u)+ \\
a_{2}g(\mathbf{A}(u,\delta _{a},\eta ),\delta _{a})+b_{2}g(\mathbf{B}%
(u,\delta _{a},\eta ),u)g(\delta _{a},u).
\end{multline*}%
Since $M$ is totally geodesic, the two first terms on the right hand side
vanish. Computer supported computations show that%
\begin{equation*}
-G\left( \widetilde{A}_{\eta ^{h}}\left( \delta _{a}^{h}\right) ,\delta
_{b}^{h}\right) =a_{2}R(u,\delta _{a},\eta ,\delta _{b}).
\end{equation*}%
Similarly, we find%
\begin{equation*}
-G\left( \widetilde{A}_{\eta ^{h}}\left( \delta _{a}^{h}\right) ,\delta
_{b}^{v}\right) =-G\left( \widetilde{A}_{\eta ^{h}}\left( \delta
_{b}^{v}\right) ,\delta _{a}^{h}\right) =\frac{1}{2}a_{1}R(u,\delta
_{a},\eta ,\delta _{b}),
\end{equation*}

\begin{equation*}
-G\left( \widetilde{A}_{\eta ^{v}}\left( \delta _{a}^{h}\right) ,\delta
_{b}^{h}\right) =\frac{1}{2}a_{1}R(u,\eta ,\delta _{a},\delta _{b}),
\end{equation*}

\begin{multline*}
G\left( \widetilde{A}_{\eta ^{h}}\left( \delta _{a}^{v}\right) ,\delta
_{b}^{v}\right) =G\left( \widetilde{A}_{\eta ^{v}}\left( \delta
_{a}^{h}\right) ,\delta _{b}^{v}\right) = \\
G\left( \widetilde{A}_{\eta v}\left( \delta _{a}^{v}\right) ,\delta
_{b}^{h}\right) =G\left( \widetilde{A}_{\eta ^{v}}\left( \delta
_{a}^{v}\right) ,\delta _{b}^{v}\right) =0,
\end{multline*}%
where $R$ is the Riemann curvature tensor of the manifold $N.$

From the above formulas we get immediately our main result.

\begin{theorem}
If $M$ is a totally geodesic submanifold isometrically immersed in a space
of constant curvature $N,$ then the lift (\ref{Lift 2}) of $M$ to $TN$ with
non-degenerate $g-$ natural metric $G$ is totally geodesic submanifold in $%
(TN,G).$
\end{theorem}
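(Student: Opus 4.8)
The plan is to read the conclusion directly off the shape-operator components displayed immediately before the statement, using only the special algebraic form of the curvature tensor in a space of constant curvature. First I would fix the relevant spanning sets. Since $M$ is totally geodesic, the corollary following Lemma \ref{Lifts of coordinate vf} gives $\delta_a^{h_N}=\delta_a^{h_M}$ and $\delta_a^{v_N}=\delta_a^{v_M}$, so the tangent space of $LM$ at each point is spanned by $\{\delta_a^{h},\delta_a^{v}\}_{a=1,\dots,m}$. By Proposition \ref{Normal vf - totg case} the normal bundle of $LM$ is spanned by the fields $S_x,T_x$, each a linear combination of $\eta_x^{h}$ and $\eta_x^{v}$; hence the span of $\{\eta_x^{h},\eta_x^{v}\}_{x=m+1,\dots,n}$ is exactly the normal bundle.

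Next I would reduce total geodesy to the vanishing of a finite list of scalars. The lift $LM$ is totally geodesic precisely when its second fundamental form vanishes, equivalently when $G(\widetilde{A}_{\eta}X,Y)=0$ for every tangent $X,Y$ and every normal $\eta$. By bilinearity of $\widetilde{A}$ and $G$ it suffices to verify this on the spanning vectors, i.e.\ on all ``matrix elements'' with $X,Y\in\{\delta_a^{h},\delta_b^{v}\}$ and $\eta\in\{\eta^{h},\eta^{v}\}$. But these are exactly the quantities computed just above the theorem: each is either zero (the four entries involving only vertical-tangent slots) or a constant multiple of one of the curvature expressions $R(u,\delta_a,\eta,\delta_b)$ or $R(u,\eta,\delta_a,\delta_b)$, where $u=v^{a}\delta_a$ is tangent to $M$.

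The heart of the argument is then a one-line curvature computation. In a space of constant curvature $c$ the Riemann tensor has the form $R(X,Y,Z,W)=c\left[g(X,Z)g(Y,W)-g(X,W)g(Y,Z)\right]$, so every component is a sum of products of two metric pairings of its arguments. In each curvature term entering the shape operator exactly one argument is the normal field $\eta$, while the remaining three ($u$, $\delta_a$, $\delta_b$) are tangent to $M$. Expanding by the constant-curvature formula, each summand carries a factor of the form $g(\eta,\text{tangent})$, which vanishes because $\eta$ is orthogonal to $M$. Thus $R(u,\delta_a,\eta,\delta_b)=R(u,\eta,\delta_a,\delta_b)=0$, every matrix element of $\widetilde{A}$ is zero, and $LM$ is totally geodesic in $(TN,G)$.

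I do not anticipate a genuine obstacle, since the computationally heavy step, namely the explicit evaluation of the shape-operator components in terms of $R$ through the connection formulas and the $F$-tensors $\mathbf{A},\dots,\mathbf{F}$, is already carried out above the statement. The only point demanding care is completeness of the bookkeeping: I would confirm that the listed components, together with the symmetry $G(\widetilde{A}_{\eta}X,Y)=G(\widetilde{A}_{\eta}Y,X)$, genuinely exhaust all pairs drawn from the tangent spanning set and both types of normal lift, so that no component is overlooked before concluding that the second fundamental form vanishes identically.
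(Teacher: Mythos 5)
Your proposal is correct and follows essentially the same route as the paper: the paper computes exactly these shape-operator components in terms of $R$ just before the theorem and then asserts the conclusion "immediately," the unstated step being precisely your observation that in constant curvature each component $R(u,\delta_a,\eta,\delta_b)$ or $R(u,\eta,\delta_a,\delta_b)$ expands into products of metric pairings, each containing a factor $g(\eta,\cdot)$ with a tangent argument, hence vanishes. Your added bookkeeping (spanning sets for the tangent and normal bundles of $LM$, and checking that the listed components exhaust all cases) is a faithful, and slightly more careful, rendering of what the paper leaves implicit.
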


Stanis\l aw Ewert-Krzemieniewski

West Pomeranian University of Technology Szczecin

School of Mathematics

Al. Piast\'{o}w 17, 70-310 Szczecin, Poland

e-mail: ewert@zut.edu.pl

\end{document}